\documentclass{article}
\bibliographystyle{abbrv}

\usepackage{amsmath,amssymb,amsthm}
\usepackage{tikz}
\usepackage{tkz-graph}
\usetikzlibrary{arrows,shapes,positioning,scopes}

\theoremstyle{plain}
\newtheorem{theorem}{Theorem}[section]
\newtheorem{lemma}[theorem]{Lemma}
\newtheorem{problem}[theorem]{Problem}
\newtheorem{corollary}[theorem]{Corollary}

\theoremstyle{definition}
\newtheorem{definition}[theorem]{Definition}

%% macros %%%%%%%%%%%%%%%%%%%%%%%%%%%%%%

\newcommand{\domsquare}
{{\mathbin{\kern.5pt\hbox to0pt{\vrule height3pt width.4pt depth0pt\hss}
\hbox{\vrule width9pt height3.4pt depth-3pt\hss}\kern-.45pt
\hbox {\vrule height3pt width.4pt depth0pt\hss}}\kern.5pt}\ignorespaces}
%%%%%
\newcommand{\ransquare}
{{\mathbin{\kern.5pt\hbox to0pt{\vrule height3.4pt width.4pt depth0pt\hss}
\hbox{\vrule width9pt height.4pt depth0pt\hss}\kern-.4pt
\hbox {\vrule height3.4pt width.4pt depth0pt\hss}}\kern.5pt}\ignorespaces}

%% operators %%%%%%%%%%%%%%%%%%%%%%%%%%%%%%
\newcommand{\A}{\mathbf{A} }
\renewcommand{\S}{\mathbf{S} }
\newcommand{\G}{\mathcal{G} }
\newcommand{\MA}{\mathbf{M(A)} }
\newcommand{\M}{\mathbf{M} }
\newcommand{\K}{\mathcal{K} }
\newcommand{\irel}{\mathtt{i}}
\newcommand{\id}{\mathsf{e}}
\newcommand{\con}[1]{#1\breve{\ }}
\renewcommand{\top}{\mathsf{T} }
\renewcommand{\div}{0'}
\renewcommand{\le}{\leqslant}

\DeclareMathOperator{\dom}{\operatorname{D} }
\DeclareMathOperator{\ran}{\operatorname{R} }
\newcommand{\Dom}{\mathrel{\domsquare}}
\newcommand{\Ran}{\mathrel{\ransquare}}

%%%%%%%%%%%%%%%%%%%%%%%%%%%%%%%%%%%%%%%%%%%%%%%

\begin{document}

\title{Undecidability of representability for lattice-ordered semigroups and ordered complemented semigroups}
\author{Murray Neuzerling}
\date{}

\AtEndDocument{\bigskip{\footnotesize%
  \textsc{Department of Mathematics and Statistics, La Trobe University, VIC 3086, Australia} \par
  \textit{E-mail address}: \texttt{M.Neuzerling@latrobe.edu.au}
}}

\maketitle

\begin{abstract}
 We prove that the problems of representing a finite ordered complemented semigroup or finite lattice-ordered semigroup as an algebra of binary relations over a finite set are undecidable. In the case that complementation is taken with respect to a universal relation, this result can be extended to infinite representations of ordered complemented semigroups.
\end{abstract}

\section{Introduction}

  Relation algebras are a natural extension of Boolean algebras. In addition to lattice operations $\land$ and $\lor$, complementation $-$, a FALSE or bottom constant $0$ and TRUE or top constant $\top$, the signature of relation algebras includes a binary operation of composition $\cdot$, a unary operation of converse $\con{}$, and an identity constant $\id$. For a detailed introduction we refer the reader to~\cite{Maddux2006}.
  
  Such an algebra is called \emph{representable} if it is isomorphic to an algebra of binary relations in which composition is represented as relational composition, $\land$ as intersection, $\lor$ as union, $0$ as the empty set, $\top$ as an equivalence relation $W$, $-$ as complement with respect to $W$, $\con{}$ as relational converse, and $\id$ as the identity relation. In this article we will consider representability for weaker signatures in which only some of these operations are respected. A similar problem is \emph{finite representability}, in which the base set of representation is to be a finite set.
  
  In 1941 Tarski~\cite{Tarski1941} offered a class of algebras in the signature above along with a finite set of axioms. Tarski~\cite[p.~88]{Tarski1941} along with J\'onsson~\cite{JonssonTarski1948} asked if every model of these axioms was isomorphic to an algebra of binary relations. In 1950, Lyndon~\cite{Lyndon1950} offered an infinite family of non-representable relation algebras that satisfied Tarski's axioms. Monk~\cite{Monk1964} later showed that no finite axiomatisation is possible. Finally in 2001 Hirsch and Hodkinson~\cite{HirschHodkinson2001} showed that representability is undecidable for finite relation algebras. So in the full signature we have non-finite axiomatisability of representability and undecidability of representability for finite algebras.
  
  A natural question then is whether or not these results hold for reducts of the full signature. A survey on this topic is offered by Schein~\cite{Schein1991}. In particular, Schein remarks that ``it would be interesting to describe `complemented semigroups'\dots This problem may be more treatable for ordered complemented semigroups.'' These are algebras with signature $\langle \cdot, \le, - \rangle$. In this article we show that representability in this signature is undecidable if complements are taken with respect to a universal relation. 
  
  Representability over a finite base set is also shown to be undecidable, a result we are able to extend to a weaker notion of complementation. We also prove undecidability of finite representability for lattice-ordered semigroups, which are those with signature $\langle \cdot, \land, \lor \rangle$. Furthermore, these results regarding either representability or finite representability apply to any signature between one of these and that of a Boolean monoid, $\langle \cdot, \land, \lor, -, \id, 0, \top \rangle$.
  
  In order to prove these results we adapt a construction of Boolean monoids used by Hirsch and Jackson~\cite{HirschJackson2012}, correcting issues that arise from weakening the signature. 

\section{Partial groups and Boolean monoids} 

  A \emph{partial group} is a system $\A = \langle A; *, e \rangle$ with a partial binary operation $*$ such that $*$ is associative and $e$ acts as an identity whenever $*$ is defined. That is to say, $\A$ is a group except that some compositions are undefined.
  
  Furthermore, $\A$ is also a \emph{square partial group} if there is a subset $\sqrt{A}$ of $A$ containing the identity $e$ such that the following holds.
\begin{enumerate}
  \item $a * b$ is defined if and only if $a,b \in \sqrt{A}$.
  \item $\sqrt{A} * \sqrt{A} = A$; that is, for every $c \in A$ there are $a,b \in \sqrt{A}$ such that $a * b = c$.
\end{enumerate}

  A partial group $\A$ is \emph{cancellative} if it satisfies the cancellation laws
\begin{align*}
  x * y = x * z &\implies y = z \\
  \text{and } x * y = z * y &\implies x = z.
\end{align*}

  From a finite cancellative square partial group $\A$ the authors of~\cite{HirschJackson2012} construct a finite Boolean monoid $\MA$ with signature $\langle \cdot, \land, \lor, -, \id, 0, \top \rangle$. While we will not go into the details of the construction here, it is worth noting that the resultant $\MA$ is a \emph{normal} Boolean monoid. That is, if $\dom(a) = (a\top) \land \id$ and $\ran(a) = (\top a) \land e$ then $\dom(a)a = a = a\ran(a)$. In a representation of a normal Boolean monoid, $\dom(a)$ and $\ran(a)$ will be represented as a restriction of the identity relation to the domain and range of $a$, respectively. Note also that $\dom(a)$ and $\ran(a)$ are idempotent in $\MA$.
  
  This construction from a partial group references the \emph{partial group embedding problem} for a class of groups $\K$. This problem takes a finite partial group $\A$ and returns YES if there is a group $\G \in \K$ and an injective map $\phi\colon A \to G$ that respects all products defined in $\A$. Evans~\cite{Evans1953} showed that this problem is decidable for a class $\K$ if and only if the uniform word problem for $\K$ is decidable. In particular, this problem is undecidable if $\K$ is either the class of groups or class of finite groups, and $\A$ is a finite cancellative square partial group~\cite[Lemma~3.4]{HirschJackson2012}.

  One of the key concepts required to prove undecidability of representability is a formal means of referring to all elements that act as injective partial maps, hereafter called \emph{injective functions}. In the full signature of relation algebras, one can consider a unary relation $\irel$ as in Definition~\ref{defn:irelbooleanmonoid} to capture these elements. 

\begin{definition}\label{defn:irelbooleanmonoid}
  Define a unary relation $\irel$ in the language of relation algebras by
\[
  x \in \irel \iff x\con{x} \le \id \text{ and } \con{x} x \le \id.
\]
\end{definition}

  In a representation respecting converse, composition and identity, elements in $\irel$ are exactly those relations that would be represented as injective functions. By considering the diversity relation $\div = -\id$, we can also view $\irel$ as the set of elements satisfying the following formula in a signature containing $\{\cdot, \land, \div\}$.

\begin{lemma}[{\cite[Lemma~2.12]{HirschJackson2012}}]\label{lemma:irel}
  Let $\mathbf{R}$ be a relation algebra. Then $a \in \irel^\mathbf{R}$ if and only if
\[
  (a\div) \land a = 0 = (\div a) \land a.
\]
\end{lemma}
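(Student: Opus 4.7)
The plan is to reduce each of the two equalities $(a\div) \land a = 0$ and $(\div a) \land a = 0$ to one of the defining conditions of $\irel$ via Peirce's triangle law of relation algebras, which asserts that for any $x, y, z$ the three statements $(xy) \land z = 0$, $(\con{x}z) \land y = 0$ and $(z\con{y}) \land x = 0$ are mutually equivalent. Along the way I will use the standard identity $\con{(-w)} = -\con{w}$ together with $\con{\id} = \id$ to deduce $\con{\div} = \div$.

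First I would apply the triangle law to $(a\div) \land a = 0$ with $x = a$, $y = \div$, $z = a$, rotating to $(\con{a}a) \land \div = 0$. Since $\div = -\id$, this says precisely $\con{a}a \le \id$, which is the second defining inequality of $\irel$.

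For the symmetric case $(\div a) \land a = 0$, I would apply the triangle law with $x = \div$, $y = a$, $z = a$ and use the other rotation, obtaining $(a\con{a}) \land \con{\div} = 0$. Since $\con{\div} = \div$, this rewrites as $a\con{a} \le \id$, the first defining inequality of $\irel$. The two rewrites are in fact equivalences, so taking them in conjunction yields the lemma in both directions.

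No step here looks genuinely hard; the only obstacle is bookkeeping, specifically keeping track of which rotation of the triangle law to apply and remembering that converse commutes with complement so that $\con{\div} = \div$. Everything else is an immediate consequence of the Schröder/Peirce identities that hold in any relation algebra.
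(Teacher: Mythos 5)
Your argument is correct. The paper itself gives no proof of this lemma --- it is quoted directly from Hirsch and Jackson --- but your derivation via the Peircean cycle law $(xy)\land z=0\iff(\con{x}z)\land y=0\iff(z\con{y})\land x=0$, together with $\con{\div}=-\con{\id}=\div$, is the standard way to establish it and is valid in any abstract relation algebra, not merely a proper one. The two rotations correctly pair $(a\div)\land a=0$ with $\con{a}a\le\id$ and $(\div a)\land a=0$ with $a\con{a}\le\id$, and since each step is an equivalence the biconditional follows.
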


  The final concepts needed are those of domain and range equivalence. Binary relations $r$ and $s$ in an algebra over base set $X$ are domain equivalent, denoted $r \Dom s$, if
\[
  \{x \in X \mid (\exists y \in X) (x,y) \in r \} = \{x \in X \mid (\exists y \in X) (x,y) \in s \}.
\]
  We use the same notation for the abstraction of this concept in a Boolean monoid, with $x \Dom y$ if $\dom(x) = \dom(y)$. Range equivalence $\Ran$ is defined similarly. Note that in signatures weaker than that of a Boolean monoid a representation may preserve $\Dom$ or $\Ran$ without necessarily preserving $\dom$ or $\ran$, respectively.

  The following theorem is a combination of Propositions 5.1 and 6.3 from~\cite{HirschJackson2012}.
\begin{theorem}\label{thm:mainundecidabilitytheorem}\label{thm:piggyback}
  Let $\A$ be a finite cancellative square partial group. The following are equivalent, with the statements in square brackets giving a separate set of equivalences.
\begin{enumerate}
  \item $\MA$ is representable [over a finite base set].
  \item There is a $\{ \cdot, \irel, \Dom, \Ran\}$-embedding of $\MA$ into $\wp(X \times X)$ for some [finite] set $X$.
  \item $\A$ embeds into a [finite] group $\G$.
\end{enumerate}
\end{theorem}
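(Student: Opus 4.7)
The plan is to prove the equivalence cyclically as (1) $\Rightarrow$ (2) $\Rightarrow$ (3) $\Rightarrow$ (1), treating the bracketed ``finite'' versions in parallel, because every step I envisage carries finiteness from hypothesis to conclusion (a finite representation has a finite base, a finite base produces a finite group, a finite group yields a finite representation).

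The implications (1) $\Rightarrow$ (2) and (3) $\Rightarrow$ (1) are the routine directions. For (1) $\Rightarrow$ (2), a representation preserves the entire Boolean-monoid signature, $\irel$ is definable from $\{\cdot, \land, \div\}$ by Lemma~\ref{lemma:irel}, and $\Dom$, $\Ran$ are the kernels of the term-definable operations $\dom, \ran$; so every representation is in particular a $\{\cdot, \irel, \Dom, \Ran\}$-embedding. For (3) $\Rightarrow$ (1) I would unpack the Hirsch--Jackson construction of $\MA$: its atoms are labelled by data coming from $\A$, so given $\psi : \A \hookrightarrow \G$ one obtains a concrete assignment of each atom to a binary relation on a base set built from $\G$ (for instance $\G$ itself under the translation action). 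One then checks generator by generator that composition, the lattice operations, $\id$, $\top$ and complementation are interpreted correctly; finiteness of $\G$ forces finiteness of the base.

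The heart of the theorem is (2) $\Rightarrow$ (3). Given a $\{\cdot, \irel, \Dom, \Ran\}$-embedding $\phi : \MA \to \wp(X \times X)$, the distinguished elements $\{\hat{a} \mid a \in A\}$ arising from the Hirsch--Jackson construction lie in $\irel^{\MA}$, and their partial products in $\MA$ reflect the operation in $\A$. Preservation of $\irel$ makes each $\phi(\hat{a})$ an injective partial function on $X$; preservation of composition forces $\phi(\hat{a}) \circ \phi(\hat{b}) = \phi(\widehat{a*b})$ whenever $a*b$ is defined in $\A$. Using $\Dom$ and $\Ran$ I would locate a base point $x_0 \in X$ on which the relevant functions and their inverses are simultaneously defined, restrict to the orbit of $x_0$, and let $\G$ be the group of permutations generated by the $\phi(\hat{a})$ acting on that orbit. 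The map $a \mapsto \phi(\hat{a})$ should then be the required injective partial-group homomorphism into $\G$, finite when $X$ is finite.

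The main obstacle is this last implication: the weak signature $\{\cdot, \irel, \Dom, \Ran\}$ supplies no converse, no complementation, no join and no top, so all group-theoretic structure must be recovered from composition of partial functions guided by $\Dom$/$\Ran$ bookkeeping. Showing that every $\phi(\hat{a})$ actually extends to a permutation of a single orbit, and that no unintended identification of distinct elements of $\A$ occurs, is the delicate step: it will require extracting from the Hirsch--Jackson construction both the precise form of the elements $\hat{a}$ and exactly which $\Dom$/$\Ran$ witnesses guarantee the coherence needed to assemble a genuine group from the partial action on $X$.
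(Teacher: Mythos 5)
The paper offers no proof of this theorem at all: it is imported as ``a combination of Propositions 5.1 and 6.3 from~\cite{HirschJackson2012}'', so the only fair comparison is with that source. Your cyclic plan $(1)\Rightarrow(2)\Rightarrow(3)\Rightarrow(1)$ follows the same strategy as Hirsch and Jackson's argument, and the easy direction $(1)\Rightarrow(2)$ is essentially complete: since a representation in the full signature puts $\id$ as the true identity and $\top$ as an equivalence relation, a computation in the style of Lemma~\ref{lemma:latticeordered} makes every $\irel$-element an injective function, and $\Dom$, $\Ran$ are the kernels of the term operations $\dom$, $\ran$.

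The two substantive implications, however, are planned rather than proved, and you say as much yourself. In $(2)\Rightarrow(3)$ the unproved claim is exactly the load-bearing one: that the injective partial functions $\phi(\hat{a})$ assemble into permutations of a common set. An injective partial function on ``the orbit of $x_0$'' need not be everywhere defined on that orbit nor surjective onto it, so ``the group of permutations generated by the $\phi(\hat{a})$'' does not yet exist. What rescues this in Hirsch--Jackson is precisely the data you defer extracting: in $\MA$ every $\hat{a}$ with $a\in\sqrt{A}$ is $\Dom$- and $\Ran$-equivalent to a single idempotent (the restriction of $\id$ to the common domain), so preservation of $\Dom$ and $\Ran$ forces each $\phi(\hat{a})$ to be a bijection of one fixed subset $D\subseteq X$ onto itself; squareness of $\A$ then handles $A\setminus\sqrt{A}$, and faithfulness of $\phi$ gives injectivity of $a\mapsto\phi(\hat{a})$ restricted to $D$. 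Without stating and using these facts the implication is open. Likewise $(3)\Rightarrow(1)$ --- building a representation of all of $\MA$, not just the elements $\hat{a}$, from a group embedding and verifying the Boolean operations --- is the bulk of Hirsch--Jackson's Proposition 5.1 and is only gestured at. The approach is the right one, but as written the argument has genuine gaps at both of its hard steps.
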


  We have already observed that both versions of (3) are known to be undecidable, and so too are the items in (1) and (2). The goal of this paper is to introduce equivalent statements regarding decidability of representability of signatures weaker than that of a Boolean monoid.
  
  In considering signatures without converse, we cannot be certain that $\top$ is represented as an equivalence relation. It turns out that a representable normal Boolean monoid can always be represented in such a way that $\top$ acts as an equivalence relation~\cite[Lemma~2.2]{HirschJackson2012}. This does not necessarily hold for weaker signatures. While this requirement on $\top$ is not without precedent for reducts of relation algebras (see~\cite{Mikulas2004, Schein1991}) we may wish to remove it, or even consider algebras in which no top element exists, and so we will always state when this assumption is in use. With this in mind, we introduce statements in Theorem~\ref{thm:main} equivalent to those in Theorem~\ref{thm:mainundecidabilitytheorem}, but regarding representability of lattice-ordered semigroups and ordered complemented semigroups, thus proving undecidability of these problems as well.

\begin{theorem}\label{thm:main}
  Let $\A$ be a finite, cancellative, square partial group. The following are equivalent.
\begin{enumerate}
  \item $\MA$ is representable [over a finite base set].
  \item $\MA$ is representable [over a finite base set] as a lattice-ordered semigroup with $\top$ represented as an equivalence relation.
  \item $\MA$ is representable [over a finite base set] as an ordered complemented semigroup with $\top$ represented as an equivalence relation.
  \item There is a $\{ \cdot, \irel, \Dom, \Ran\}$-embedding of $\MA$ into $\wp(X \times X)$ for some [finite] set $X$.
  \item $\A$ embeds into a [finite] group $\G$.
\end{enumerate}
\end{theorem}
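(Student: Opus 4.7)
The equivalences $(1)\Leftrightarrow(4)\Leftrightarrow(5)$, in both the finite and infinite variants, are exactly Theorem~\ref{thm:mainundecidabilitytheorem}, so the work is to splice $(2)$ and $(3)$ into this chain. The directions $(1)\Rightarrow(2)$ and $(1)\Rightarrow(3)$ are immediate: a full Boolean-monoid representation restricts to one in any reduct of its signature, and Lemma~2.2 of~\cite{HirschJackson2012} lets us take $\top$ to be an equivalence relation without loss of generality. The substance is therefore in the two remaining arrows, $(2)\Rightarrow(4)$ and $(3)\Rightarrow(4)$.

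For both implications, I would start with a representation $\phi \colon \MA \to \wp(X\times X)$ in the weaker signature with $\phi(\top) = W$ an equivalence relation, and modify $X$ so that $\phi$ is already a $\{\cdot,\irel,\Dom,\Ran\}$-embedding. The first reduction is to arrange $\phi(0)=\emptyset$ and $\phi(\id) = \Delta_W$, the diagonal on the field of $W$. In case~(3), complementation with respect to $W$ gives $\phi(0) = W \setminus W = \emptyset$ automatically, and the equations $\phi(\id) \cup \phi(\div) = W$ and $\phi(\id) \cap \phi(\div) = \emptyset$ constrain $\phi(\id)$ tightly; a quotient of $X$ by an equivalence finer than $W$ then forces $\phi(\id)$ onto the diagonal. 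In case~(2) one first deletes from $X$ the pairs in the absorbing $\phi(0)$, and then quotients by the equivalence closure of $\phi(\id)$, using that $\phi(\id)$ is idempotent and acts as a two-sided composition identity on the whole image.

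Once the normalisation is complete, the remaining preservation statements follow quickly. Since $\dom(a) = (a\top)\land \id$ and $\ran(a) = (\top a) \land \id$ are expressions in $\{\cdot, \land\}$ applied to named elements of $\MA$, and $\phi$ respects $\cdot$ and binary meet (directly in case~(2); in case~(3) via the identity $\phi(a) \cap \phi(b) = \phi(a) \setminus \phi(-b)$, valid since $\phi(a), \phi(b) \subseteq W$), the relations $\Dom$ and $\Ran$ transport correctly. Preservation of $\irel$ follows from Lemma~\ref{lemma:irel}, which expresses $a \in \irel$ as the pair of equations $(a\div)\land a = 0 = (\div a) \land a$ in the named constants $\div, 0 \in \MA$, both now represented correctly.

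The crux will be the normalisation step in case~(2), where we must force $\phi(\id)$ onto the diagonal without access to complementation. My plan is to exploit the rich supply of principal idempotents $\dom(a), \ran(a) \leq \id$ of $\MA$, one for each element of $\A$, which must be represented as genuinely distinct partial identities in the image; no two can collapse into the same $\phi(\id)$-class without identifying elements of $\MA$, so the quotient by $\phi(\id)$-equivalence is faithful. This separation argument is, I expect, the point alluded to in the introduction as the \emph{correction} to the Hirsch-Jackson construction that is needed once the signature is weakened.
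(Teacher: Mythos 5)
Your skeleton is the paper's: $(1)\Leftrightarrow(4)\Leftrightarrow(5)$ is Theorem~\ref{thm:mainundecidabilitytheorem}, $(1)\Rightarrow(2),(3)$ is the reduct observation plus Lemma~2.2 of~\cite{HirschJackson2012}, and the content lies in $(2),(3)\Rightarrow(4)$ via some quotient of the base set followed by the equational description of $\irel$ from Lemma~\ref{lemma:irel}. But two of your steps have genuine gaps. First, the normalisation. In the signature $\{\cdot,\land,\lor\}$ (and even in $\{\cdot,\le,-\}$) the relation $h(\id)$ is only an idempotent two-sided identity for composition; it need not be symmetric, so its equivalence closure is strictly coarser than $h(\id)$ itself, and quotienting by it threatens faithfulness: to recover $(x,y)\in h(b)$ from $([x],[y])\in\hat h(b)$ one composes $\id\cdot b\cdot\id=b$, which requires the $\id$-edges in \emph{both} directions between the identified points. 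Your proposed repair --- that distinct principal idempotents $\dom(a),\ran(a)$ cannot be collapsed --- only separates subidentity idempotents, not arbitrary distinct $a,b\in M(A)$, so it does not establish faithfulness of the quotient. The paper instead quotients by the strictly finer relation ``$\id$ acts as the universal relation on $\{x,y\}$'' (Lemma~\ref{lemma:quotient}), for which faithfulness and preservation of composition are immediate by the two-sided composition just described. Note also that the paper never forces $h(\id)$ onto the diagonal or $h(0)$ to be empty in case $(2)$; Lemma~\ref{lemma:latticeordered} works directly with the configuration $(y,x),(y,z)\in h(a)$, disposes of a possibly nonempty $h(0)$ by composing with $0\top=\top 0=0$, and uses $\div\lor\id=\top$ to force $\id$ to act universally on $\{x,z\}$, whence $x=z$ by the quotient. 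Your ``delete the pairs in $\phi(0)$'' is not a well-defined operation on representations.

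Second, domain and range equivalence in case $(3)$. The term $\dom(a)=(a\top)\land\id$ involves $\land$, which a $\{\cdot,\le,-\}$-representation need not preserve: your identity $\phi(a)\cap\phi(b)=\phi(a)\setminus\phi(-b)$ is true of the \emph{sets} $\phi(a),\phi(b)$, but it says nothing about $\phi(a\land b)$, and order-preservation alone gives only $\phi(a\land b)\subseteq\phi(a)\cap\phi(b)$. The paper avoids the operation $\dom$ entirely and redefines the \emph{relations}: $x\Dom y$ iff $x\top=y\top$ and $x\Ran y$ iff $\top x=\top y$ (Lemma~\ref{lemma:domranequivalence}), which needs only composition with the named constant $\top$ and the hypothesis that $\top$ is represented as an equivalence relation. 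With these two repairs --- the finer quotient and the $\top$-based definition of $\Dom$, $\Ran$ --- your outline becomes the paper's proof.
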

  We note that a representation of $\MA$ as a lattice-ordered semigroup would respect the operations in $\{\cdot, \land, \lor\}$, while a representation as an ordered complemented semigroup would respect those in $\{\cdot, \le, -\}$. Since both signatures are weaker than that of a Boolean monoid we can see that $(1) \implies (2)$ and $(1) \implies (3)$. Similarly we note that these results apply to any signature between one of these reducts and that of a Boolean monoid. 
  
  The remaining implications are $(2) \implies (4)$ and $(3) \implies (4)$. Since composition is preserved in a representation of a semigroup, this aspect is trivial. We must prove that relations in $\irel$ are preserved as injective functions under a representation in either signature. We do this for lattice-ordered semigroups in Lemma~\ref{lemma:latticeordered} and ordered complemented semigroups in Lemma~\ref{lemma:orderedcomplemented}. In Lemma~\ref{lemma:domranequivalence}, we prove that a representation of $\MA$ in either reduct preserves domain and range equivalence. 
  
  There is some ambiguity here as to the definition of complementation in a reduct. In the full signature of relation algebras complementation is taken with respect to the top element. In the absence of a top element, one can declare that if $x$ is related to $y$ by an element of the algebra, then for all relations $a$ we have that $(x,y)$ belongs to just one of $\{-a,a\}$. This mimics the behaviour of complementation when a top element is present by taking complements with respect to the union of all elements. We call this \emph{relative complementation}. A stronger definition would take complements with respect to a universal relation, demanding that every $(x,y)$ belongs to just one of $\{-a,a\}$. We refer to this as \emph{universal complementation}.
    
  For lattice-ordered semigroups and ordered complemented semigroups with relative complementation, the requirement on $\top$ can be removed if the representation is to be over a finite base set, and we show this in Lemmas~\ref{lemma:latticeorderedtop} and \ref{lemma:orderedcomplementedtop}. A representation of an ordered complemented semigroup with universal complementation will always represent $\top$ as an equivalence relation if it exists, as shown in Lemma~\ref{lemma:strongcomplement}, and so representability of algebras in this signature is undecidable. These results are stated in Theorem~\ref{thm:succinctresults}.

\begin{theorem}\label{thm:succinctresults}
  Let $\tau$ be a signature such that $\tau \subseteq \{\cdot, \land, \lor, -, \id, 0, \top\}$. The following problems are undecidable.
\begin{itemize}
  \item Finite representability of algebras with signature $\tau$ where $\{\cdot, \land, \lor\} \subseteq \tau$.
  \item Finite representability of algebras with signature $\tau$ where $\{\cdot, \le, -\} \subseteq \tau$ and $-$ is to be represented as relative complementation.
  \item Representability and finite representability of algebras with signature $\tau$ where $\{\cdot, \le, -\} \subseteq \tau$ and $-$ is to be represented as universal complementation.
\end{itemize}
\end{theorem}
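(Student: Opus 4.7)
The plan is to reduce from the partial group embedding problem. By the result of Evans cited after Theorem~\ref{thm:mainundecidabilitytheorem}, this problem is undecidable for both the class of all groups and the class of finite groups when the input is a finite cancellative square partial group. For each signature $\tau$ permitted by a given bullet point, and each such finite partial group $\A$, we form $\MA$ via the Hirsch--Jackson construction and consider its $\tau$-reduct $\MA\!\restriction_\tau$. The goal is to show that $\MA\!\restriction_\tau$ is (finitely) representable if and only if $\A$ embeds into a [finite] group. Combined with Theorem~\ref{thm:main}, this reduces the partial group embedding problem to the representability problem in the signature $\tau$, yielding undecidability.

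The forward direction is immediate: if $\A$ embeds into a [finite] group, then by Theorem~\ref{thm:main} the Boolean monoid $\MA$ is (finitely) representable, and forgetting the operations outside $\tau$ gives a (finite) representation of $\MA\!\restriction_\tau$.

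For the converse, suppose $\MA\!\restriction_\tau$ is (finitely) representable. Since $\tau$ contains $\{\cdot,\land,\lor\}$ in the first bullet or $\{\cdot,\le,-\}$ in the second and third bullets, a further restriction yields a representation in one of the minimal signatures named in Theorem~\ref{thm:main}(2) or (3). The subtlety is that Theorem~\ref{thm:main} assumes $\top$ is represented as an equivalence relation, which is not a priori available. This is handled case by case: for universal complementation (bullet~3), Lemma~\ref{lemma:strongcomplement} forces $\top$, if present, to be represented as an equivalence relation, covering both finite and infinite representations; for the two finite-representability claims (bullets~1 and~2), Lemmas~\ref{lemma:latticeorderedtop} and \ref{lemma:orderedcomplementedtop} allow us to replace an arbitrary representation by one in which $\top$ is interpreted as an equivalence relation, and when $\top \notin \tau$ the representation can simply be restricted to the support of the union of the images of the basic elements. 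Once the equivalence-relation hypothesis on $\top$ holds, Lemmas~\ref{lemma:latticeordered}, \ref{lemma:orderedcomplemented}, and \ref{lemma:domranequivalence} show that $\irel$, $\Dom$, and $\Ran$ are preserved, yielding a $\{\cdot,\irel,\Dom,\Ran\}$-embedding of $\MA$. Theorem~\ref{thm:main} then produces the desired [finite] group embedding of $\A$.

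The main obstacle is the $\top$ bookkeeping: one must check, for every admissible $\tau$ in each bullet, that Lemmas~\ref{lemma:strongcomplement}, \ref{lemma:latticeorderedtop}, and \ref{lemma:orderedcomplementedtop} genuinely apply to pass from an arbitrary $\tau$-representation to one in which $\top$ acts as an equivalence relation, independently of whether $\top$, $\id$, or $0$ appear in $\tau$. Once this reduction is accomplished, the rest of the argument is a straightforward composition of previously established lemmas with Theorem~\ref{thm:main}.
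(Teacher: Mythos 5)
Your proposal is correct and follows essentially the same route as the paper: reduce from the partial group embedding problem via Theorem~\ref{thm:main}, discharging the equivalence-relation hypothesis on $\top$ with Lemmas~\ref{lemma:latticeorderedtop} and \ref{lemma:orderedcomplementedtop} in the finite cases and Lemma~\ref{lemma:strongcomplement} in the universal-complementation case. Your extra step of restricting to the support when $\top\notin\tau$ is unnecessary, since $\top$ is always an element of the universe of $\MA$ and preservation of the order already gives $h(a)\subseteq h(\top)$ for every $a$, which is all those lemmas need.
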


  Theorem~\ref{thm:succinctresults} also yields results about non-finite axiomatisability for the same signatures. If there exists a finite set of first-order axioms characterising representability of a class of algebras, then one can consider an algorithm that checks a finite algebra against each of these axioms to determine representability. Hence, finite axiomatisability implies decidability of representability, giving us the results in Corollary~\ref{cor:succinctresults}.

\begin{corollary}\label{cor:succinctresults}
  Let $\tau$ be a signature such that $\tau \subseteq \{\cdot, \land, \lor, -, \id, 0, \top\}$. The following classes of algebras are not finitely axiomatisable in first order logic.
\begin{itemize}
  \item Any class whose finite members are the finitely representable algebras with signature $\tau$ where $\{\cdot, \land, \lor\} \subseteq \tau$.
  \item Any class whose finite members are the finitely representable algebras with signature $\tau$ where $\{\cdot, \le, -\} \subseteq \tau$ and $-$ is to be represented as relative complementation.
  \item Representable algebras with signature $\tau$ where $\{\cdot, \le, -\} \subseteq \tau$ and $-$ is to be represented as universal complementation.
\end{itemize}
\end{corollary}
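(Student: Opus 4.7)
The plan is to follow the standard argument sketched in the paragraph preceding the corollary: for any finitely axiomatisable class in a finite signature, membership for finite structures is decidable by evaluating each axiom, so undecidability of (finite) representability immediately yields non-finite axiomatisability. My proof would consist of combining this general observation with the three parts of Theorem~\ref{thm:succinctresults}.

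First I would fix a signature $\tau$ as in the corollary together with a class $\mathcal{C}$ of $\tau$-algebras matching one of the three bullets. Supposing for contradiction that $\mathcal{C}$ is axiomatised by a finite set $\Sigma$ of first-order sentences in $\tau$, I would note that for a finite $\tau$-algebra $\A$ the truth of each sentence of $\Sigma$ on $\A$ is decidable, by iterating each quantifier over the finite universe of $\A$; since both $\Sigma$ and the universe of $\A$ are finite, this procedure halts, giving an effective test for $\A \models \Sigma$.

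Finally I would unpack what satisfaction of $\Sigma$ means, case by case. For the first two bullets, $\A \models \Sigma$ is equivalent to $\A \in \mathcal{C}$, which for finite $\A$ coincides with finite representability by the hypothesis on $\mathcal{C}$; this yields a decision procedure for finite representability in the relevant signature, contradicting the corresponding bullet of Theorem~\ref{thm:succinctresults}. For the third bullet, $\mathcal{C}$ is the class of representable algebras itself, so $\A \models \Sigma$ says precisely that $\A$ is representable, contradicting the undecidability of representability of finite algebras asserted in the third bullet of Theorem~\ref{thm:succinctresults}. The main (and essentially only) obstacle is the bookkeeping required to match each bullet of the corollary to the appropriate bullet of Theorem~\ref{thm:succinctresults}, and in particular to observe that for the third bullet it is undecidability of representability (not merely of finite representability) of finite algebras that is being invoked; no new construction is needed.
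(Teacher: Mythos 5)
Your proposal is correct and follows exactly the paper's own argument: the corollary is derived from Theorem~\ref{thm:succinctresults} by observing that a finite first-order axiomatisation would make (finite) representability of finite algebras decidable via brute-force model checking, contradicting the theorem. The only additional content in your write-up is the explicit bullet-by-bullet bookkeeping, which the paper leaves implicit.
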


  We note that non-finite axiomatisability of representability of algebras with signature $\tau$ where $\{\cdot, \land, \lor\} \subseteq \tau \subseteq \{\cdot, \land, \lor, \con{}, \id, 0, \top\}$ was shown by Andr\'eka~\cite{Andreka1991}; see also Andr\'eka and Mikul\'{a}s~\cite{AndrekaMikulas2011}. 
  
\section{Proofs of main results}
  
  Let $\S = \langle S; \cdot, \id \rangle$ be a monoid. We also consider meet $\land$, join $\lor$, complementation $-$, a partial order relation $\le$, and constants $0$ and $\top$.

  Let $h\colon \S \to \wp ( X \times X )$ be a representation of $\S$ on a base set $X$ preserving at least composition. Define an equivalence relation $\sim$ on $X$ such that for all $x,y \in X$, $x \sim y$ if $x = y$ or $\id$ acts as the universal relation on the set $\{x,y\}$, a situation illustrated in Figure~\ref{fig:compressed}. Define a new representation $\hat{h} \colon \S \to \wp( X/\!\!\sim \times\, X/\!\!\sim )$ such that for $a \in S$, 
\[ 
  \hat{h} \colon a \mapsto \{ [x], [y] \mid (\exists x' \in [x])(\exists y' \in [y]) \; (x',y') \in h(a) \}.
\]
 
\begin{figure}[ht]
\centering
\begin{tikzpicture}[->,>=stealth',shorten >=1pt,auto,node distance=3cm,thick]]
\tikzstyle{vertex}=[circle, fill=black, draw=black, inner sep = 0.06cm]

\node[vertex, label = below left: $x$] (x) at (0:0cm) {};
\node[vertex, label = above right: $y$] (y) at (45:2cm) {};
\draw[->] (x) .. controls ([xshift=0mm, yshift=1.44cm] x) and ([xshift=-7.22mm, yshift=0cm] y)  .. (y);
\draw[->] (y) .. controls ([xshift=0mm, yshift=-1.44cm] y) and ([xshift=7.22mm, yshift=0cm] x)  .. (x);
\Loop[dist=2cm,dir=SOWE,label=$\id$,labelstyle=below left](x);
\Loop[dist=2cm,dir=NOEA,label=$\id$,labelstyle=above right](y);
\node at (1.3cm,0.1cm) {$\id$};
\node at (0cm,1.3cm) {$\id$};
\end{tikzpicture}
\caption{$\id$ acting as the universal relation on $\{x,y\}$}\label{fig:compressed}
\end{figure}
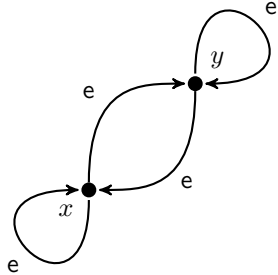

\begin{lemma}\label{lemma:quotient}
  If $h$ is a representation of $\S$ preserving composition then so too is $\hat{h}$. Furthermore, $\hat{h}$ preserves Boolean operations and constants $0$, $\top$ and $\id$, if they are correctly represented by $h$.
\end{lemma}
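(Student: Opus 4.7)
The plan is to verify, in order, that $\sim$ is an equivalence relation, that $\hat{h}$ preserves composition, and then that each of the lattice operations, complementation, and constants $0$, $\top$, $\id$ are preserved whenever they are correctly represented by $h$. The single algebraic observation driving everything is that since $h$ preserves composition and $\id$ is a monoid identity, $h(\id) \cdot h(a) = h(\id \cdot a) = h(a) = h(a) \cdot h(\id)$ for all $a \in \S$; that is, $h(\id)$ is a two-sided identity on the image under relational composition.

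For $\sim$: reflexivity and symmetry are immediate, and for transitivity I would take distinct $x, y, z$ with $x \sim y$, $y \sim z$ and apply $h(\id) \cdot h(\id) = h(\id)$ to the pairs $(x,y), (y,z) \in h(\id)$ to conclude $(x,z) \in h(\id)$ (and symmetrically), yielding universality of $\id$ on $\{x, z\}$. For composition, the inclusion $\hat{h}(a \cdot b) \subseteq \hat{h}(a) \cdot \hat{h}(b)$ is immediate from the definition by passing to the $\sim$-class of an intermediate witness. For the reverse, witnesses $(x_0, z_0) \in h(a)$ and $(z_1, y_0) \in h(b)$ with $z_0, z_1 \in [z]$ must be glued at the midpoint: if $z_0 \neq z_1$ then $(z_0, z_1) \in h(\id)$ by the definition of $\sim$, giving $(x_0, z_1) \in h(a) \cdot h(\id) = h(a)$ and then $(x_0, y_0) \in h(a) \cdot h(b) = h(a \cdot b)$, so $([x], [y]) \in \hat{h}(a \cdot b)$.

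The same identity-gluing trick---transporting a witness along an $h(\id)$-edge on either side---in fact shows that for every $a \in \S$, membership of pairs in $h(a)$ is constant across each product $[x] \times [y]$ of $\sim$-classes. Consequently the existential clause in the definition of $\hat{h}$ is equivalent to the corresponding universal clause, from which preservation of $\lor$, $\land$, $0$, and (relative or universal) complementation follow in a routine fashion. Preservation of $\top$ represented as an equivalence or universal relation reduces to observing that its image descends to a relation of the same type on $X/\!\!\sim$, and if $h$ represents $\id$ as the identity relation on $X$ then $\sim$ collapses to equality, making the quotient trivial and $\hat{h}(\id)$ the identity relation on $X/\!\!\sim$. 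The main obstacle is the reverse inclusion of composition preservation (and its analogues for $\land$ and complementation), where the gluing step is essential; the definition of $\sim$ is arranged precisely to make this step work.
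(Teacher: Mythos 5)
Your argument follows essentially the same route as the paper: the identity-gluing trick (transporting witnesses along $h(\id)$-edges) is exactly the mechanism the paper uses for composition, and your observation that each $h(a)$ is saturated, i.e.\ a union of blocks $[x]\times[y]$, is the precise content behind the paper's remark that $\hat{h}$ ``only contracts'' relations, from which preservation of the Boolean operations and constants follows. Your handling of transitivity of $\sim$, both inclusions for composition, and the constants is correct.

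The one omission is faithfulness. In this paper a ``representation'' is in particular an injective map, and the paper devotes the first paragraph of its proof to showing that $\hat{h}$ remains faithful: given $a\ne b$ with $(x,y)\in h(a)\setminus h(b)$, if $([x],[y])$ were in $\hat{h}(b)$ one could glue along $h(\id)$-edges to force $(x,y)\in h(\id\, b\, \id)=h(b)$, a contradiction. Your proposal never checks injectivity of $\hat{h}$, and without it the lemma is not proved. That said, the gap is easily closed from what you already have: since every $h(a)$ is saturated with respect to $\sim$, the passage to the quotient is injective on saturated sets, so $h(a)\ne h(b)$ immediately gives $\hat{h}(a)\ne\hat{h}(b)$. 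You should state this explicitly rather than leave it implicit.
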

\begin{proof}
  Consider $a,b \in S$ such that $a \ne b$. Then, since $h$ is faithful, we may assume without loss of generality that there exists $(x,y) \in h(a) \backslash h(b)$. Then $([x], [y]) \in \hat{h}(a)$. Suppose by way of contradiction that $([x],[y]) \in \hat{h}(b)$. Then there exists $(w,z) \in h(b)$ with $\id$ acting as the universal relation on $\{x,w\}$ and on $\{y,z\}$. That is, $(x,w) \in h(\id)$ and $(z,y) \in h(\id)$. Since $h$ preserves composition we have that $(x,y) \in h(ebe)$ and so $(x,y) \in h(b)$. But this violates the assumptions on $(x,y)$. So $\hat{h}$ is faithful.
  
  Now we turn our attention to composition under $\hat{h}$. Let $([x],[y]) \in \hat{h}(a)$ and $([y],[z]) \in \hat{h}(b)$. Without loss of generality, assume $(x,y) \in h(a)$ and $(y,z) \in h(b)$, since as before we can always compose elements with $\id$ to move around within equivalence classes. Then, as $h$ preserves composition, $(x,z) \in h(ab)$ and so $([x],[z]) \in \hat{h}(ab)$. Similarly we have that $([x],[z]) \in \hat{h}(ab) \implies ([x],[z]) \in \hat{h}(a)\hat{h}(b)$. So $\hat{h}$ also preserves composition.

  We note that $\hat{h}$ only contracts binary relations in $h(\S)$. Hence, Boolean operations and constants $0$, $\top$ and $\id$ are preserved in $\hat{h}$, assuming they were correctly represented by $h$. In particular, if $\id$ is represented correctly then $(x,y) \in h(\id) \iff x = y$, and so $h(\id) = \hat{h}(\id)$.
\end{proof}

  It is by this quotient that we will ensure that the elements of the Boolean monoid $\MA$ that are in $\irel$ are represented as injective functions. Recall from Lemma~\ref{lemma:irel} that an element $a \in \irel$ if and only if $(a\div) \land a = 0 = (\div a) \land a$.

\begin{lemma}\label{lemma:latticeordered}
  Suppose that the Boolean monoid $\MA$ is representable in a signature containing $\{\cdot, \land, \lor\}$ in such a way that $\top$ is represented as an equivalence relation. Then there exists a representation in the same signature with the property that if $a \in M(A)$ is such that $(a \div ) \land a = 0 = (\div a) \land a$ then $a$ is represented as an injective function.
\end{lemma}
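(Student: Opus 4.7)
The plan is to apply the quotient construction of Lemma~\ref{lemma:quotient} directly to the given representation $h$ to obtain $\hat h$. Since $h$ preserves $\{\cdot, \land, \lor\}$ and $h(\top)$ is an equivalence relation, that lemma immediately yields a faithful representation $\hat h$ in the same signature; what remains is to verify that every $a \in M(A)$ with $(a\div) \land a = 0 = (\div a) \land a$ is represented by $\hat h$ as an injective partial function on $X/\!\!\sim$.

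For the functional half I would suppose $([x], [y_1]), ([x], [y_2]) \in \hat h(a)$ with $[y_1] \ne [y_2]$ and derive a contradiction. Using the trick from the proof of Lemma~\ref{lemma:quotient} of pre-composing with $\id$ to move between representatives, I can arrange that $(x, y_1'), (x, y_2') \in h(a)$ for some $y_i' \sim y_i$; since $[y_1] \ne [y_2]$, we have $y_1' \not\sim y_2'$ and in particular $y_1' \ne y_2'$, and since $a \le \top$ the points $y_1', y_2'$ lie in the same $\top$-class $C$ as $x$. The supposition $y_1' \not\sim y_2'$ forces at least one pair in $\{y_1', y_2'\} \times \{y_1', y_2'\}$ to lie outside $h(\id)$, and $\id \lor \div = \top$ then places that pair in $h(\div)$. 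Following the appropriate one of $(x, y_1'), (x, y_2') \in h(a)$ by this $h(\div)$-pair produces $(x, y_i') \in h(a) \circ h(\div) = h(a\div)$ for some $i$, hence $(x, y_i') \in h(a \land a\div) = h(0)$. I would then invoke the Boolean monoid identity $\top \cdot 0 \cdot \top = 0$, giving $h(\top) \circ \{(x, y_i')\} \circ h(\top) \subseteq h(0)$, to promote this single pair to $C \times C \subseteq h(0)$. Since $0 \le \id$ gives $h(0) \subseteq h(\id)$, it follows that $\id$ acts universally on $\{y_1', y_2'\}$, contradicting $y_1' \not\sim y_2'$. The injectivity half is the mirror image, starting from $([x_1], [y]), ([x_2], [y]) \in \hat h(a)$ with $[x_1] \ne [x_2]$, composing an $h(\div)$-pair on the left of the appropriate $h(a)$-pair, and invoking $(\div a) \land a = 0$.

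The main obstacle is the case analysis just sketched: one must handle all four candidates for the pair of $\{y_1', y_2'\} \times \{y_1', y_2'\}$ that escapes $h(\id)$, including the two reflexive pairs $(y_1', y_1')$ and $(y_2', y_2')$, and check that each really does deposit one of $(x, y_1'), (x, y_2')$ into $h(0)$. Everything else, notably that $\hat h$ preserves the lattice operations and represents $\top$ as an equivalence relation, is already supplied by Lemma~\ref{lemma:quotient}.
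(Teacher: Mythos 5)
Your proposal is correct and follows essentially the same route as the paper: reduce to the quotient representation of Lemma~\ref{lemma:quotient}, then use $\id \lor \div = \top$, $(a\div)\land a = 0$, $\top 0\top = 0$ and $0 \le \id$ to force the two candidate images into a single $\sim$-class. The only difference is bookkeeping: the paper obtains the diagonal pairs in $h(\id)$ by composing $(x,z)$ with $(z,x)$ rather than by your four-way case analysis over which pair escapes $h(\id)$.
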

\begin{proof}
  Let $h \colon \MA \to \wp(X \times X)$ be such a representation of $\MA$ onto some base set $X$ and consider $a \in M(A)$ such that $(a \div ) \land a = 0 = (\div a) \land a$. By applying Lemma~\ref{lemma:quotient} we may work under the assumption that $h = \hat{h}$, and note that this preserves the property that $\top$ is represented as an equivalence relation. 
  
  Suppose there exists $x,y,z \in X$ such that $(y,x) \in h(a)$ and $(y,z) \in h(a)$, a situation illustrated in Figure~\ref{fig:anotfunction}. We will show that $x=z$. As $\top$ is acting as the universal relation on $\{x, y, z\}$ and $0\top = \top 0 = 0$, if $h(0)$ relates any two (potentially equal) elements of $\{x, y, z\}$ then it must act as the universal relation on all three. Since $0 \le e$, this would imply that $e$ is also acting as a universal relation, a situation we have precluded unless $x = z$. So assume otherwise, that is, assume that $h(0)$ is not relating any two elements of $\{x, y, z\}$.
   
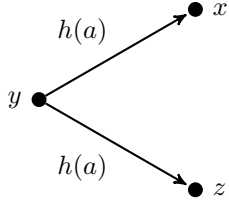
\begin{figure}[ht]
\centering
\begin{tikzpicture}[->,>=stealth',shorten >=1pt,auto,node distance=3cm,thick,scale=0.8]
\tikzstyle{vertex}=[circle, fill=black, draw=black, inner sep = 0.06cm]

\node[vertex, label = left: $y$] (y) at (0:0cm) {};
\node[vertex, label = right: $x$] (x) at (30:3cm) {};
\node[vertex, label = right: $z$] (z) at (-30:3cm) {};

\path[->,font=\normalsize]
(y) edge node [above left] {$h(a)$} (x)
(y) edge node [below left] {$h(a)$} (z);
\end{tikzpicture}
\caption{An element $a$ not represented as a function under $h$.}\label{fig:anotfunction}
\end{figure}

  We note also that $(x,z) \notin h(\div)$ as if this were the case then we would have $(y,z) \in h(a\div)$. But $a \land (a\div) = 0$, giving $(y,z) \in h(0)$. Similarly, we have that $(z,x) \notin h(\div)$. As $\div  \lor \id = \top$ we therefore have $(x,z)$ and $(z,x)$ in $h(e)$. Hence $(x,x)$ and $(z,z)$ are in $e$. As $\hat{h} = h$ it follows that $x = z$ as required. That is, $a$ is a function under $h$. By symmetry we also have that $a$ is injective under $h$.
\end{proof}  
  
  The requirement that $(\div a) \land a = 0 = (a \div) \land a$ simply ensures that $a$ is disjoint from $a \div$ and also from $\div a$. We can restate this with operations in $\{\cdot, \le, -\}$ such that $(\div a) \land a = 0$ if and only if $a \le -(\div a)$, and similarly $(a \div) \land a$ if and only if $a \le -(a\div)$. This allows us to replicate the previous result in the signature of ordered complemented semigroups.

\begin{lemma}\label{lemma:orderedcomplemented}
  Suppose that the Boolean monoid $\MA$ is representable in a signature containing $\{\cdot, \le, -\}$ in such a way that $\top$ is represented as an equivalence relation. Then there exists a representation in the same signature with the property that if $a \in M(A)$ is such that $a \le -(a\div)$ and $a \le -(\div a)$ then $a$ is represented as an injective function.
\end{lemma}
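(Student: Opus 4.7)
The plan is to follow the template of Lemma~\ref{lemma:latticeordered}, observing that in a Boolean monoid $a \le -b$ is equivalent to $a \land b = 0$; the hypotheses $a \le -(a\div)$ and $a \le -(\div a)$ therefore encode exactly the same disjointness of $h(a)$ from $h(a\div)$ and $h(\div a)$ that the previous lemma obtained from the meet-and-zero phrasing. Accordingly, take a representation $h$ satisfying the stated hypotheses, apply Lemma~\ref{lemma:quotient} to assume $h = \hat{h}$ (this preserves $\top$ being an equivalence relation, as well as $-$, since the relevant Boolean operations are already being correctly represented), and then mirror the previous argument.

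\textbf{Main steps.} Suppose for contradiction that there exist $x,y,z \in X$ with $x \ne z$ and $(y,x),(y,z) \in h(a)$. Since $a \le \top$ and $\top$ is represented by an equivalence relation, $\top$ acts universally on $\{x,y,z\}$. Preservation of $\le$ and $-$ gives $h(a) \cap h(a\div) = \emptyset$; hence if $(x,z)$ lay in $h(\div)$ then composition would place $(y,z) \in h(a)\circ h(\div) = h(a\div)$, contradicting the disjointness. Thus $(x,z) \notin h(\div)$, and by the symmetric reasoning $(z,x) \notin h(\div)$.

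\textbf{The substitution for $\lor$.} The one step in Lemma~\ref{lemma:latticeordered} visibly using join is the appeal to $\div \lor \id = \top$ to place $(x,z)$ and $(z,x)$ in $h(\id)$. In the present signature I use instead the identity $-\div = \id$ in $\MA$, which is immediate from $\div = -\id$ and involutivity of $-$: since $h$ preserves $-$, every pair in $h(\top) \setminus h(\div)$ lies in $h(-\div) = h(\id)$, and so $(x,z),(z,x) \in h(\id)$. Composing these also gives $(x,x),(z,z) \in h(\id)$, so $\id$ acts universally on $\{x,z\}$, contradicting $\hat{h} = h$ and the assumption $x \ne z$. The symmetric use of $a \le -(\div a)$ then supplies injectivity. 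The only small subtlety I expect to check is that $h(\top) \setminus h(\div) \subseteq h(-\div)$ under whichever of relative or universal complementation is in force; this holds in both cases because the complement is taken with respect to a set containing $h(\top)$.
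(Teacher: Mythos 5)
Your proposal is correct and follows essentially the same route as the paper's proof: quotient via Lemma~\ref{lemma:quotient} to assume $h=\hat h$, rule out $(x,z),(z,x)\in h(\div)$ using the disjointness coming from $a\le-(a\div)$ and $a\le-(\div a)$, and then use the fact that $\id$ and $\div$ are complementary (your $-\div=\id$ is exactly the paper's phrasing of this step) to force $(x,z),(z,x)\in h(\id)$ and hence $x=z$. The only cosmetic difference is that the paper also pauses to note that $0$ is forced to be the empty set in this signature, which your argument does not need.
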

\begin{proof}
  Let $h \colon \MA \to \wp(X \times X)$ be a such a representation of $\MA$ onto some base set $X$ and consider $a \in M(A)$ such that $a \le -(a\div)$ and $a \le -(\div a)$. Again we work under the assumption that $h = \hat{h}$. Since $0$ is the unique element with the property that $0 \le -0$, we have that $h(0) \subseteq h(\top)\backslash h(0)$, and so $0$ is represented correctly as the empty set.
  
  We take $x,y,z$ as in Figure~\ref{fig:anotfunction} with $(y,x) \in h(a)$ and $(y,z) \in h(a)$. As $a \le -(a\div)$, we cannot have $(x,z) \in h(\div )$, since we could compose to get $(y,z) \in h(a \div )$. Similarly, $(z,x) \notin h(\div)$. Because $\id$ and $\div$ are complementary with respect to $\top$, we have that $(x,z)$ and $(z,x)$ are in $h(\id)$. We compose to realise $\id$ acting as a universal relation on $\{x,z\}$, a situation we have precluded unless $x = z$. Hence, $a$ is represented as a function under $h$. By symmetry we also have that $a$ is injective under $h$.
\end{proof}

  Hence, the $\irel$ relation as given in Definition~\ref{defn:irelbooleanmonoid} can be recovered in the case of signatures containing  $\{\cdot, \land, \lor\}$ or $\{\cdot, \le, -\}$, as long as $\top$ is to be represented as an equivalence relation. To complete the $\{\cdot, \irel, \Dom, \Ran\}$-embedding required by Theorem~\ref{thm:main}, we must also check that domain and range equivalence are respected in the representation of a Boolean monoid as either a lattice-ordered semigroup or an ordered complemented semigroup.

\begin{lemma}\label{lemma:domranequivalence}
  Suppose that the Boolean monoid $\MA$ is representable in a signature containing $\{\cdot, \land, \lor\}$ or in a signature containing $\{\cdot, \le, -\}$, and in either case suppose that $\top$ is represented as an equivalence relation. Then one can define domain and range equivalence of the elements in $\MA$ in such a way that they are respected by the representation.
\end{lemma}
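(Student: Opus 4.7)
The plan is to adopt the definition already given in the text—$a \Dom b$ in $\MA$ iff $\dom(a) = \dom(b)$, and dually for $\Ran$—and show it is respected by any representation $h\colon \MA \to \wp(X \times X)$ in either reduct signature with $h(\top)$ an equivalence relation. The key move is to reformulate $\Dom$ inside $\MA$ as an identity involving only composition and $\top$, which is then trivially transported by $h$.

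First, I would extract $\top \cdot \top = \top$ in $\MA$ from the representation hypothesis: since $h(\top)$ is an equivalence relation we have $h(\top) \cdot h(\top) = h(\top)$, and composition preservation plus faithfulness of $h$ lift this back to $\MA$. Combined with normality ($\dom(a) \cdot a = a$) this produces the chain
\[
  a\top = \dom(a) \cdot a \cdot \top \le \dom(a) \cdot \top \le (a\top) \cdot \top = a\top,
\]
so $a\top = \dom(a) \cdot \top$ in $\MA$. Consequently $\dom(a) = \dom(b) \iff a\top = b\top$, the converse direction being immediate from $\dom(a) = (a\top) \land \id$.

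Now suppose $a \Dom b$ in $\MA$. Applying $h$ and using composition preservation yields $h(a) \cdot h(\top) = h(b) \cdot h(\top)$ in $\wp(X \times X)$. In both reducts the order $\le$ is preserved (directly in $\{\cdot, \le, -\}$, and via $a \le b \iff a \land b = a$ in $\{\cdot, \land, \lor\}$), so $a \le \top$ in $\MA$ gives $h(a), h(b) \subseteq h(\top)$. The concluding observation is geometric: because $h(\top)$ is an equivalence relation and every pair in $h(a)$ ends in its field, composition with $h(\top)$ on the right leaves the domain intact. Each $(x, y) \in h(a)$ extends to $(x, y) \in h(a) \cdot h(\top)$ via $(y, y) \in h(\top)$, while conversely any pair in $h(a) \cdot h(\top)$ starts at some $x \in \dom(h(a))$. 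Thus $\dom(h(a)) = \dom(h(a) \cdot h(\top)) = \dom(h(b) \cdot h(\top)) = \dom(h(b))$.

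Range equivalence is handled by the symmetric argument, characterising $a \Ran b$ in $\MA$ by $\top a = \top b$ and noting $\ran(h(\top) \cdot h(a)) = \ran(h(a))$. The delicate point is the initial bootstrap: idempotence of $\top$ is not forced in an abstract Boolean monoid, so extracting $\top^2 = \top$ in $\MA$ from the representation hypothesis is essential. Once that is in hand, $\Dom$ and $\Ran$ reduce to identities involving only composition and $\top$, which the representation preserves by definition.
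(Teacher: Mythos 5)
Your proposal is correct and takes essentially the same approach as the paper, whose entire proof is the one-line definition $x \Dom y$ iff $x\top = y\top$ (and $x \Ran y$ iff $\top x = \top y$); you simply supply the details the paper leaves implicit, namely that this coincides with $\dom(x)=\dom(y)$ and that $\dom(h(a))=\dom(h(a)h(\top))$ when $h(\top)$ is an equivalence relation containing $h(a)$. One small quibble: $\top\cdot\top=\top$ already holds abstractly in $\MA$ (from $\id\le\top$, monotonicity of composition, and $\top$ being the top), so the bootstrap via faithfulness of $h$ is unnecessary, though harmless.
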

\begin{proof}
  In either case, take $x \Dom y$ if $x\top = y\top$, and $x \Ran y$ if $\top x = \top y$.
\end{proof}

  This establishes that $(2) \implies (4)$ and $(3) \implies (4)$ in Theorem~\ref{thm:main}, completing the proof. Subject to the assumption that $\top$ is represented as an equivalence relation, we have undecidability of representability and finite representability of finite algebras in either signature. If we restrict our attention to representations over a finite base set then we can remove this assumption. The proofs are largely the same for lattice-ordered semigroups and ordered complemented semigroups, and both involve Lemma~\ref{lemma:idempotentlemma}. 
  
\begin{lemma}\label{lemma:idempotentlemma}
  Let $h$ be a representation of a Boolean monoid $\M$ onto a finite base set $X$ respecting composition and order. Then for every nonzero idempotent $f \in M$ there exists an element of $X$ fixed by $h(f)$ but not by $h(0)$.
\end{lemma}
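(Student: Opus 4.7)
My plan is to treat $R := h(f)$ as an idempotent binary relation on the finite set $X$ and extract a self-loop from it that is not swallowed by $Z := h(0)$. Two observations from the hypotheses set the stage: first, since $h$ is faithful and respects order with $0 \le f$ and $f \ne 0$, we have $Z \subsetneq R$; second, because $h$ preserves composition and $\M$ satisfies $f \cdot f = f$ together with $0 \cdot a = a \cdot 0 = 0$ for every $a$, the relation $R$ is idempotent in the sense $R \circ R = R$, while $Z$ absorbs $R$ on both sides, i.e.\ $R \circ Z = Z \circ R = Z$.

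Starting from any pair $(x, y) \in R \setminus Z$, the identity $R = R^n$ for every $n \ge 1$ lets me realise $(x, y)$ as an $R$-path of any desired length. I would then choose $n > |X|$ and apply the pigeonhole principle to the path $x = x_0, x_1, \ldots, x_n = y$ to find a repeated vertex $z := x_i = x_j$ with $i < j$. Transitivity of $R$ (a consequence of $R \circ R \subseteq R$) then delivers $(z, z) \in R$, together with $(x, z), (z, y) \in R$ after collapsing the prefix and suffix of the path; the degenerate possibilities $z = x$ or $z = y$ pose no difficulty.

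The step carrying the real content of the argument is verifying that this self-loop is not already in $Z$: if $(z, z)$ were in $Z$, the two-sided absorption property would force $(x, y) \in R \circ Z \circ R = Z$, contradicting the choice of $(x, y)$. The crux is therefore the interplay between the idempotence of $R$, which produces the loop, and the ideal-like behaviour of $Z$ under composition, which confines the loop to lie outside $Z$. Both features follow solely from the composition and order hypotheses on $h$, with no appeal to complementation or any other Boolean operation.
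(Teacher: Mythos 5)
Your proof is correct and takes essentially the same approach as the paper's: use faithfulness and order-preservation to find a pair in $h(f)\setminus h(0)$, use idempotence of $h(f)$ together with finiteness of $X$ to extract a loop, and use the fact that $0$ absorbs under composition to conclude the loop lies outside $h(0)$. Your explicit pigeonhole argument via $R = R^n$ for $n > |X|$ is just a cleaner formalisation of the paper's iterated-witness step.
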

\begin{proof}
  By faithfulness there exists $x,y \in X$ such that $(x,y) \in h(f)$ and $(x,y) \notin h(0)$, or such that $(x,y) \notin h(f)$ and $(x,y) \in h(0)$. Since $0$ is the bottom element, we conclude that the latter is not possible and assume that $(x,y) \in h(f)$. Since $f$ is idempotent we must witness an element $z \in X$ such that $(x,z) \in h(f)$ and $(z,y) \in h(f)$. We must continue to witness this for every pair in $h(f)$. But the representation is finite, so we must eventually witness a loop $(x_a,x_a) \in h(f)$. If $(x_a, x_a) \in h(0)$ also, then we could compose to get $(x,y) \in h(0)$, violating our initial assumption. Hence, $f$ but not $0$ fixes $x_a$ in the representation.
\end{proof}

  We first remove the assumption that $\top$ be represented as an equivalence relation in finite representations of $\MA$ as a lattice-ordered semigroup, although in actuality only composition and meet are required for the proof. Recall that in the Boolean monoid signature we defined $\dom(a) = (a\top) \land \id$ and that $\dom(a)$ is idempotent in $\MA$.

\begin{lemma}\label{lemma:latticeorderedtop}
  Let $h$ be a representation of the Boolean monoid $\MA$ onto a finite base set $X$ respecting the operations in $\{\cdot, \land\}$. Then there exists a representation $h^\circ$ in the same signature but representing the top element $\top$ as an equivalence relation. Furthermore, if $h$ respects the operations in $\{\lor,-,\id, 0\}$ then so too does $h^\circ$. 
\end{lemma}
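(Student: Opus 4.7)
The plan is to exhibit $h^\circ$ as a restriction of $h$ to a subset $Y \subseteq X$, intersected with a fixed equivalence relation that will serve as $h^\circ(\top)$. Since $\id \le \top$ in any Boolean monoid, $\top = \id \cdot \top \le \top \cdot \top \le \top$, so $\top \cdot \top = \top$, and hence $h(\top) = h(\top) \cdot h(\top)$ is transitive. This transitivity, together with Lemma~\ref{lemma:idempotentlemma} applied to the nonzero idempotent $\top$, makes the set $Z = \{x \in X : (x,x) \in h(\top)\}$ nonempty. On $Z$, declare $x \sim y$ whenever $(x,y), (y,x) \in h(\top)$; transitivity of $h(\top)$ makes $\sim$ an equivalence relation whose classes $C$ satisfy $h(\top) \cap (C \times C) = C \times C$.

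Call a class $C$ \emph{good} if some (equivalently every) $x \in C$ has $(x,x) \notin h(0)$; well-definedness on the class uses the absorptions $\top \cdot 0 = 0 = 0 \cdot \top$, which propagate $(y,y) \in h(0)$ to $(x,x) \in h(0)$ via $(x,y),(y,x) \in h(\top)$. Lemma~\ref{lemma:idempotentlemma} applied to $\top$ produces an $x$ contributing to at least one good class, so the union $Y$ of the good classes is nonempty. Let $\sim_*$ be the equivalence relation on $Y$ whose classes are the good $C$'s, and define $h^\circ(a) = h(a) \cap {\sim_*}$. Then $h^\circ(\top) = {\sim_*}$ is the required equivalence relation and preservation of meet is immediate. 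For composition, the substantive point is that if $(x,y) \in h(ab) \cap {\sim_*}$ with $x,y$ in a good class $C$, then any witness $z$ with $(x,z) \in h(a)$ and $(z,y) \in h(b)$ automatically lies in $C$: transitivity yields $(y,z) \in h(\top)$ from $(y,x),(x,z) \in h(\top)$, and combined with $(z,y) \in h(\top)$ this forces $(z,z) \in h(\top)$, placing $z \in Z$ with $z \sim y$.

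The main obstacle is faithfulness: given distinct $a,b \in \MA$, I must distinguish them with a pair in $Y \times Y$. Exploiting the Boolean monoid structure of $\MA$, choose (without loss of generality) $c = a \land -b \neq 0$; since $\MA$ is normal, $\dom(c)$ is a nonzero idempotent, so Lemma~\ref{lemma:idempotentlemma} gives some $x$ fixed by $h(\dom(c))$ but not by $h(0)$, placing $x$ in some good class $C$. From $\dom(c) = (c\top) \land \id \le c\top$ I extract a $w$ with $(x,w) \in h(c)$ and $(w,x) \in h(\top)$; the transitivity argument above places $w \in C$, so $(x,w) \in h^\circ(c) \subseteq h^\circ(a)$. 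Finally $(x,w) \notin h^\circ(b)$ because $h(0) \cap {\sim_*} = \emptyset$: any $(u,v) \in h(0)$ with $u \sim v$ would yield $(u,u) \in h(0 \cdot \top) = h(0)$, contradicting that $u$'s class is good. The ``furthermore'' clause is then routine: since $\sim_* \subseteq h(\top)$, intersection with $\sim_*$ commutes with each of $\lor$, $-$, $\id$ and $0$ whenever these are correctly represented by $h$.
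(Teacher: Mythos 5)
Your proof is correct and follows essentially the same route as the paper's: your relation $\sim_*$ is exactly the symmetric interior $h(\top)^\circ$ used there (further trimmed by the classes on which $h(0)$ acts universally, a harmless refinement that becomes vacuous when $0$ is correctly represented), and both arguments rest on Lemma~\ref{lemma:idempotentlemma} applied to the nonzero idempotent $\dom(b\land(-a))$ supplied by normality. Your faithfulness step, which witnesses the composition $\dom(c)\le c\top$ to exhibit an explicit pair separating $a$ from $b$, is precisely the device the paper reserves for Lemma~\ref{lemma:orderedcomplementedtop}; the paper's own proof of Lemma~\ref{lemma:latticeorderedtop} argues slightly more indirectly by comparing $h^\circ(\dom(b\land(-a)))$ with $h^\circ(0)$, but the two are interchangeable here.
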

\begin{proof}
  For a binary relation $r$ define the symmetric interior 
\[
  r^\circ := \{ (x,y) \mid (x,y) \in r \text{ and } (y,x) \in r \}.
\]
  If $r$ is reflexive and transitive then one can view $r^\circ$ as the largest equivalence relation contained in $r$. Define $h^\circ \colon M(A) \to \wp(X \times X)$ as $h^\circ \colon a \mapsto h(a) \cap h(\top)^\circ$.
  
  Since we are only omitting non-loops in the representation we have that $h^\circ$ preserves any operation in $\{\lor,-,\id, 0\}$, assuming that $h$ does. For composition, consider $(x,y) \in h^\circ(ab)$ for some $a,b \in M(A)$. Then since $h$ respects composition there exists $z \in X$ such that $(x,z) \in h(a)$ and $(z,y) \in h(b)$. As $(x,y)$ is in the image of $h^\circ$ we have that $(y,x) \in h(T)^\circ$. So $(y,z) \in h(\top a)$ and, as $\top$ is the top element, $(y,z) \in h(\top)$. Similarly, $(z,x) \in h(\top)$. We conclude that $(x,z) \in h^\circ(a)$ and $(z,x) \in h^\circ(b)$. By similar composition with $\top$ we have that if $(x,z) \in h^\circ(a)$ and $(z,y) \in h^\circ(b)$ then $(x,y) \in h^\circ(ab)$, and so composition is respected by $h^\circ$.
  
  Now we must prove that $h^\circ$ is faithful. Let $a,b \in M(A)$ be distinct and assume without loss of generality that $b \nleq a$, so that $b\land(-a) \ne 0$. Note that we are only considering $-a$ as an element of $M(A)$, and do not require complementation to be represented in any way. As $\MA$ is normal we have that $\dom(b \land(-a))(b \land(-a)) = (b \land(-a))$, and so $\dom(b\land(-a)) \ne 0$. We established in Lemma~\ref{lemma:idempotentlemma} that nonzero idempotents under $h$ fix points in $X$ that are not fixed by $0$. As such, $h^\circ(\dom(b\land(-a))) \ne h^\circ(0)$. But clearly $h^\circ(\dom(a\land(-a))) = h^\circ(0)$. As such $h^\circ(a) \ne h^\circ(b)$, and so $h^\circ$ is faithful.

  Since $h$ respects composition and order, $h^\circ$ represents $\top$ as transitive and symmetric, and hence reflexive on a subset of $X$. Since $h^\circ$ is faithful, this subset is nonempty. Hence, $h^\circ$ represents $\top$ as an equivalence relation over a nonempty subset of $X$.
\end{proof}

   If we have a representation of $\MA$ as a lattice-ordered semigroup, then we can take the symmetric interior and then the quotient used in Lemma~\ref{lemma:quotient} to obtain a similar representation preserving the $\irel$ relation and representing $\top$ as an equivalence relation. This allows us to remove the requirement in Lemma~\ref{lemma:latticeordered} that the top element of $\MA$ be represented as an equivalence relation, if the representation is to be taken over a finite set. The following lemma permits us to do the same in the case of Lemma~\ref{lemma:orderedcomplemented}, which deals with ordered complemented semigroups.

\begin{lemma}\label{lemma:orderedcomplementedtop}
  Let $h$ be a representation of the Boolean monoid $\MA$ onto a finite base set $X$ respecting the operations in $\{\cdot, \le, -\}$. Then there exists a representation $h^\circ$ in the same signature but representing the top element $\top$ as an equivalence relation. Furthermore, if $h$ respects the operations in $\{\land, \lor, \id, 0\}$ then so too does $h^\circ$.
\end{lemma}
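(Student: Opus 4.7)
The plan is to use the same symmetric interior construction as in Lemma~\ref{lemma:latticeorderedtop}: define $h^\circ \colon M(A) \to \wp(X \times X)$ by $h^\circ(a) = h(a) \cap h(\top)^\circ$, and then verify that $h^\circ$ respects $\{\cdot, \le, -\}$ (and any of $\{\land, \lor, \id, 0\}$ respected by $h$), is faithful, and represents $\top$ as an equivalence relation. Preservation of composition is essentially unchanged from Lemma~\ref{lemma:latticeorderedtop}, using transitivity of $h(\top)$ (which follows from $\top\top \le \top$ together with $h$ respecting $\cdot$ and $\le$) to carry $h(\top)^\circ$-membership across a composition. Preservation of $\le$ is immediate from the monotonicity of $h$, and the optional operations are preserved because $h^\circ$ only removes non-loop pairs from each $h(a)$.

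The new step is showing that $h^\circ$ respects relative complementation. For this I would observe that any pair $(x, y)$ related by some $h^\circ(c)$ automatically lies in $h(\top)^\circ$, and hence $(x, y) \in h^\circ(a) \iff (x, y) \in h(a)$, and likewise for $-a$. Since such a pair is also related by $h(c)$, the relative complementation of $h$ gives exactly one of $(x, y) \in h(a)$ and $(x, y) \in h(-a)$, and this dichotomy transfers directly to $h^\circ$.

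Faithfulness is the main obstacle, since the signature $\{\cdot, \le, -\}$ lacks $\land$ and so the Boolean-style step of Lemma~\ref{lemma:latticeorderedtop} must be reorganised using only composition, order, and the relative complementation just established. I would take distinct $a, b \in M(A)$ with, say, $b \not\le a$, so that $b \land (-a) \ne 0$ in $\MA$ and, by normality, $\dom(b \land (-a)) \ne 0$. Lemma~\ref{lemma:idempotentlemma} applied to $h$ then produces $x \in X$ with $(x, x) \in h(\dom(b \land (-a)))$ but $(x, x) \notin h(0)$, and since loops are always symmetric this loop also lies in $h^\circ(\dom(b \land (-a)))$. Using $\dom(c) \le c\top$ in $\MA$ together with the fact that $h^\circ$ respects $\le$ and $\cdot$, I can decompose this loop to obtain some $v$ with $(x, v) \in h^\circ(b \land (-a))$. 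The inequalities $b \land (-a) \le b$ and $b \land (-a) \le -a$ then place $(x, v)$ in both $h^\circ(b)$ and $h^\circ(-a)$, and the relative complementation property of $h^\circ$ forces $(x, v) \notin h^\circ(a)$, separating $h^\circ(b)$ from $h^\circ(a)$. Finally, $h^\circ(\top)$ is transitive and symmetric by the above and reflexive on the nonempty subset of $X$ guaranteed by faithfulness, so after restricting the base set it acts as an equivalence relation as required.
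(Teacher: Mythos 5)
Your proposal is correct and follows essentially the same route as the paper: the same symmetric-interior construction $h^\circ(a) = h(a) \cap h(\top)^\circ$, the same reduction of faithfulness to witnessing a nonempty $h^\circ(c)$ for a nonzero $c \le b$, $c \le -a$ (your $c = b \land (-a)$), and the same use of Lemma~\ref{lemma:idempotentlemma} together with $\dom(c) \le c\top$ to produce that witness. Your explicit verification that $h^\circ$ preserves relative complementation is a detail the paper leaves implicit, but it is the same argument in substance.
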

\begin{proof} 
  The proof is largely the same as for Lemma~\ref{lemma:latticeorderedtop}, though we must recover faithfulness with a different approach. Recall that $0$ is forced to be represented as the empty set since  $0 \le -0$, and so any representation preserving $\{\cdot, \le -\}$ also trivially preserves $0$. Again we define $h^\circ \colon \MA \to \wp(X \times X)$ as $h^\circ \colon a \mapsto h(a) \cap h(T)^\circ$ and take distinct $a,b \in M(A)$ with the assumption that $b \nleq a$. Then there exists a nonzero $c$ such that $c \le b$ and $c \le -a$. Hence we can distinguish between $a$ and $b$ if we witness a nonempty $h^\circ(c)$.
  
  Since $c$ is nonzero we use Lemma~\ref{lemma:idempotentlemma} to conclude that $\dom(c)$ fixes a point $x \in X$ under $h$. Now we note that, since $\top$ has maximum domain and range and composition on the right cannot restrict domain, $D(c) \le D(c)T = cT$. We must witness this composition as in Figure~\ref{fig:witnesscT} and so $h^\circ(D(c)) \le h^\circ(cT) = h^\circ(c)h^\circ(T)$. That is, $h^\circ(c)$ is nonempty.
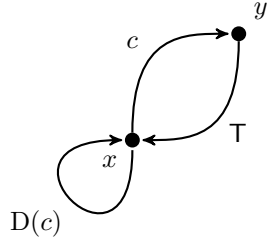
\begin{figure}[ht]
\centering
\begin{tikzpicture}[->,>=stealth',shorten >=1pt,auto,node distance=3cm,thick]]
\tikzstyle{vertex}=[circle, fill=black, draw=black, inner sep = 0.06cm]

\node[vertex, label = below left: $x$] (x) at (0:0cm) {};
\node[vertex, label = above right: $y$] (y) at (45:2cm) {};
\draw[->] (x) .. controls ([xshift=0mm, yshift=1.44cm] x) and ([xshift=-7.22mm, yshift=0cm] y)  .. (y);
\draw[->] (y) .. controls ([xshift=0mm, yshift=-1.44cm] y) and ([xshift=7.22mm, yshift=0cm] x)  .. (x);
\Loop[dist=2cm,dir=SOWE,label=$\dom(c)$,labelstyle=below left](x);
\node at (1.4cm,0.1cm) {$\top$};
\node at (0cm,1.3cm) {$c$};
\end{tikzpicture}
\caption{Witnessing the composition $D(c) \le cT$.}\label{fig:witnesscT}
\end{figure}
\end{proof}

  We noted before that the definition of complementation requires care in the absence of a top element. We used here relative complementation which mimics the definition of complementation when $\top$ is present: that if $x$ is related to $y$ by an element of the algebra, then for all relations $a$ we have that $(x,y)$ belongs to just one of $\{-a,a\}$. Under this weaker definition and without $\top$ acting as the universal relation we could have, for example, a situation as in Figure~\ref{fig:anotfunction} such that no element relates $x$ to $z$ or $z$ to $x$. If this occurs, we cannot take the complement to reason that $\id$ acts as the universal relation on these points, as we did in Lemma~\ref{lemma:orderedcomplemented}. Under the weaker definition of relative complementation, these proofs require that an element already relates these two points. 

  Alternatively, we can represent complementation as universal complementation in which the complement is taken with respect to $\wp(X \times X)$, where $X$ is the base set of the representation. That is, in the absence of a top element we can take complements with respect to a universal relation. Under this interpretation it will turn out that if $\top$ does exist then it must act as an equivalence relation in any representation. We thank Marcel Jackson for the following observation.

\begin{lemma}\label{lemma:strongcomplement}
  Let $\S$ be a complemented semigroup of binary relations with complement taken with respect to a universal relation. If there exists an idempotent $f$ such that $f(-f) = -f = (-f)f$ and $-f$ is also idempotent, then $f$ is the universal relation.
\end{lemma}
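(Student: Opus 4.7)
The plan is to first show that $f$ is reflexive on the whole base set $U$ of the representation (so that $-a = (U \times U) \setminus a$ for every $a$), and then to derive a contradiction from the assumption that $-f$ is nonempty. Idempotency of $f$ turns out not to be needed for this particular argument; the absorption identities $f(-f) = -f = (-f)f$, together with $(-f)^2 = -f$ and universal complementation, do all the work.

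For reflexivity, I would fix $y \in U$ and suppose for contradiction $(y,y) \in -f$. From the inclusion $-f \subseteq f(-f)$ given by the hypothesis $f(-f) = -f$, there exists $z$ with $(y,z) \in f$ and $(z,y) \in -f$. But composing $(y,y) \in -f$ with $(y,z) \in f$ through the midpoint $y$ exhibits $(y,z)$ as an element of $(-f)f$, and the forward inclusion $(-f)f \subseteq -f$ then puts $(y,z) \in -f$, contradicting $(y,z) \in f$ and the disjointness $f \cap (-f) = \emptyset$. Hence $(y,y) \in f$ for every $y \in U$.

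For the main step, I would suppose for contradiction that $-f$ is nonempty and pick $(a,b) \in -f$. The first task is to show $(b,a) \in -f$ as well: if instead $(b,a) \in f$, then composing $(a,b) \in -f$ with $(b,a) \in f$ yields $(a,a) \in (-f)f = -f$, contradicting $(a,a) \in f$ from the reflexivity step. So by universal complementation $(b,a) \in -f$, and composing the two $-f$-pairs then gives $(a,a) \in (-f)(-f) = -f$, again contradicting $(a,a) \in f$. Therefore $-f = \emptyset$ and $f = U \times U$.

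The main point to keep straight is the use of both directions of the set equalities $f(-f) = -f$ and $(-f)f = -f$, together with universal complementation (which forces any pair outside $f$ to lie in $-f$); once reflexivity of $f$ everywhere is in hand, these tools make the rest essentially mechanical. I do not expect any serious obstacle beyond getting the order of composition right when invoking each absorption identity.
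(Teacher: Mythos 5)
Your proof is correct, and it follows the same elementary composition-chasing strategy as the paper, but with a genuinely different decomposition. The paper first proves $f$ is symmetric (from $xfy$ and $y(-f)x$ it composes to get $x(-f)x$ and then $x(-f)y$, contradicting $xfy$), uses idempotency of $f$ to conclude that $f$ is an equivalence relation on its domain, and only then argues that the domain must be full. You instead establish reflexivity of $f$ on the \emph{entire} base set at the outset, exploiting the inclusion $-f \subseteq f(-f)$ to manufacture the witness $z$, and then show directly that $-f$ must be empty by proving it symmetric and composing a pair with its reverse. A small payoff of your route is that $ff = f$ is never invoked --- only the absorption identities, idempotency of $-f$, and universal complementation are used --- so you prove a marginally stronger statement; the paper's route instead isolates the conceptually suggestive intermediate fact that $f$ is an equivalence relation on its domain. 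Both arguments use universal complementation in the same essential way, namely to place any pair outside $f$ into $-f$, and both are sound.
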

\begin{proof}
  Suppose $f$ relates $x$ to $y$ and, for contradiction, $f$ does not relate $y$ to $x$. Then $y(-f)x$ and, by assumption, $x(-f)x$. Then composing we get that $x(-f)y$, a contradiction. Hence, $f$ is a symmetric, and so a reflexive binary relation.
  
  We now know that $f$ is an equivalence relation on its domain. Suppose that the domain of $f$ is not full, and so we have that $x(-f)y$ and $y(-f)x$. Since $-f$ is idempotent we compose to get $x(-f)x$. If $x$ is in the domain of $f$ then we have a contradiction. If not, take $y$ to be in the domain of $f$ to reach a similar contradiction. Hence, $f$ is an equivalence relation with full domain.
\end{proof}

  Hence, any representation of $\MA$ respecting the operations in $\{\cdot, \le, -\}$ in which $-$ is represented as universal relation will always represent $\top$ as that universal relation. This extends the undecidability of representability of finite algebras in this signature to include infinite representations.
  
  It would be interesting to see if we can do the same for infinite representability of lattice-ordered semigroups.
  
\begin{problem}
  We know that finite representability is undecidable for lattice-ordered semigroups. Can the same be said of representability in general?
\end{problem}
  
  Another problem to consider is semigroups with either form of complementation but no order. As far as we can determine this problem remains unexplored in the literature, and is mentioned by Schein~\cite{Schein1991}.
  
\begin{problem}
  Is representability or finite representability decidable in the signature $\langle \cdot, - \rangle$, with either relative or universal complementation? Are representable algebras in this signature finitely axiomatisable?
\end{problem}

%% bibliography %%%%%%%%%%%%%%%%%%%%%%%%%%%%%%                                     

\bibliography{MNeuzerling}

\end{document}